\numberwithin{equation}{section}
\newcommand{\be}{\begin{equation}}
\newcommand{\ee}{\end{equation}}
\newcommand{\bea}{\begin{eqnarray}}
\newcommand{\eea}{\end{eqnarray}}
\newcommand{\beas}{\begin{eqnarray*}}
\newcommand{\eeas}{\end{eqnarray*}}
\newtheorem{theorem}{Theorem}[section]
\newtheorem{proposition}[theorem]{Proposition}
\newtheorem{lemma}[theorem]{Lemma}
\newtheorem{example}[theorem]{Example}
\newtheorem{assumption}[theorem]{Assumption}
\newenvironment{Assumption}{\begin{assumption}\rm}{\end{assumption}}
\newenvironment{proof}{\addvspace{\medskipamount}\par\noindent{\it Proof}.}
{\unskip\nobreak\hfill$\Box$\par\addvspace{\medskipamount}}
\newcommand{\brak}[1]{\left(#1\right)}    
\newcommand{\crl}[1]{\left\{#1\right\}}   
\newcommand{\edg}[1]{\left[#1\right]}     
\newcommand{\qed}{\unskip\nobreak\hfill$\Box$\par\addvspace{\medskipamount}}
\newcommand{\p}{\mathbb{P}}
\def\1{{\mathbf 1}}
\title{BSDE formulation of combined regular\\ and singular stochastic control problems}
\author{Bruno Bouchard\thanks{Universit\'e Paris-Dauphine,  PSL Research University,  CNRS, CEREMADE, Paris, France (bouchard@ceremade.dauphine.fr). Partially supported by CAESARS - ANR-15-CE05-0024.} \and Patrick Cheridito\thanks{Department of Mathematics, ETH Zurich, R\"amistrasse 101, 8092 Zurich,
Switzerland.}
\and Ying Hu\thanks{Univ Rennes, CNRS, IRMAR - UMR 6625, F-35000 Rennes, France (ying.hu@univ-rennes1.fr) and School of Mathematical Sciences, Fudan University, Shanghai 200433, China.
Partially supported by Lebesgue center of mathematics ``Investissements d'avenir"
program - ANR-11-LABX-0020-01,  by CAESARS - ANR-15-CE05-0024 and by MFG - ANR-16-CE40-0015-01.}}
\date{January 2018}
\begin{document}

\maketitle

\begin{abstract} In this paper we study a class of combined  regular and
singular stochastic control problems that can be expressed as 
constrained BSDEs. In the Markovian case, this reduces to a characterization 
through a PDE with gradient constraint. But the BSDE formulation makes it possible to move beyond 
Markovian models and consider path-dependent problems. We also provide an approximation 
of the original control problem with standard BSDEs that yield a characterization of approximately 
optimal values and controls. 

\end{abstract}

{\bf Keywords}: singular stochastic control, constraint backward stochastic differential equation, 
minimal supersolution.

{\bf  Mathematics Subject Classification.} 93E20

\section{Introduction} 

We consider a class of continuous-time stochastic control problems involving two different controls:
a regular control affecting the state variable in an absolutely continuous way, and a singular 
control resulting in a cumulative impact of finite variation. For standard stochastic control in 
continuous time, we refer to the textbooks \cite{FR, K80, FS, YZ, Touzi}.
Singular stochastic control goes back to \cite{BC67a, BC67b} and has subsequently been studied by e.g.
\cite{BSW80, Kar81, Kar83, KS84, Kar85, KS85, CMR85, B87, MT87, EK88, K93, HS95}.
For a typical Markovian singular stochastic control problems, it can be deduced from 
dynamic programming arguments that the optimal value is given by a viscosity solution of
a PDE with gradient constraint. On the other hand, it has been shown 
that PDEs with gradient constraints are related to BSDEs with $Z$-constraints; 
see, e.g. \cite{BH2, PX13}. 

In this paper we directly show that a wide variety of combined regular and singular stochastic control
problems can be represented as $Z$-constrained BSDEs\footnote{Independently, Elie, Moreau and Possama\"{i}
have been working on a similar idea in \cite{EMP17}. But the exact class of control problems 
studied in \cite{EMP17} is different. Moreover, they employ analytic methods, while we use purely 
probabilistic arguments.}. This has the advantage that it allows to study path-dependent problems.
More precisely, we consider optimization problems of the form
\be \label{problem}
\sup_{\alpha, \beta} 
\mathbb{E} \edg{\int_0^T f(t,X^{\alpha,\beta},\alpha_t) dt + \int_0^T g(t,X^{\alpha,\beta},\alpha_t) d\beta_t 
+ h(X^{\alpha,\beta})}
\ee
for a $d$-dimensional controlled process with dynamics 
\be \label{contrdyn}
dX^{\alpha,\beta}_t = \mu(t,X^{\alpha,\beta}, \alpha_t)dt + \nu(t,X^{\alpha,\beta}, \alpha_t) d\beta_t 
+ \sigma(t,X^{\alpha,\beta}) dW_t, \quad X_0 = x \in \mathbb{R}^d,
\ee
where $(W_t)$ is an $n$-dimensional Brownian motion, $(\alpha_t)$ is a predictable process taking
values in a compact subset $A \subseteq \mathbb{R}^k$ (the regular control) and $(\beta_t)$ is an 
$l$-dimensional process with nondecreasing components (the singular control). 
The coefficients $\mu, \nu, \sigma$ and the functions $f,g,h$ are all allowed to depend in a 
non-anticipative way on the paths of $X^{\alpha,\beta}$.

Our main representation result is that the optimal value of \eqref{problem} is given by the 
initial value of the minimal supersolution of a BSDE 
\be \label{sBSDE}
Y_t = h(X) + \int_t^T p(s,X,Z_s) ds - \int_t^T Z_s dW_s 
\ee
subject to a constraint of the form $q(t,X,Z_t) \in \mathbb{R}^l_-$, where $(X_t)$ is the unique strong solution of 
an SDE
$$
dX_t = \eta(t,X) dt + \sigma(t,X) dW_t, \quad X_0 = x,
$$
with the same $\sigma$-coefficient as \eqref{contrdyn}.

In addition, we show that the original problem \eqref{problem} can be approximated with a sequence of 
standard BSDEs 
\be \label{jBSDE}
Y^j_t = h(X) + \int_t^T p^j(s,X,Z^j_s) ds - \int_t^T Z^j_s dW_s.
\ee
While the minimal supersolution of the constrained BSDE \eqref{sBSDE} gives the optimal value 
of the control problem \eqref{problem}, the BSDEs \eqref{jBSDE} can be used to characterize 
nearly optimal values as well as approximately optimal controls.

Due to the constraint $q$, it might happen that the minimal supersolution of \eqref{sBSDE} 
jumps at the final time $T$. In our last result, we show how this jump can be removed 
by replacing $h$ with the smallest majorant $\hat{h}$ of $h$ that is consistent with $q$ -- the 
so-called face-lift of $h$.

The rest of the paper is structured as follows. In Section \ref{sec:results}, we introduce the notation and 
our main results. All proofs are given in Section \ref{sec:proofs}. 

\section{Results}
\label{sec:results}

We consider a combined regular and singular stochastic control problem of the form 
\begin{equation} \label{I}
I := \sup_{(\alpha, \beta) \in {\cal A}} 
\mathbb{E} \edg{\int_0^T f(t,X^{\alpha,\beta},\alpha_t) dt + \int_0^T g(t,X^{\alpha,\beta},\alpha_t) d\beta_t 
+ h(X^{\alpha,\beta})}
\end{equation}
for a constant time horizon $T \in \mathbb{R}_+$ and a $d$-dimensional controlled process evolving according to
\begin{equation} \label{dyn}
dX^{\alpha,\beta}_t = \mu(t,X^{\alpha,\beta}, \alpha_t)dt + \nu(t,X^{\alpha,\beta}, \alpha_t) d\beta_t 
+ \sigma(t,X^{\alpha,\beta}) dW_t, \quad X_0 = x \in \mathbb{R}^d,
\end{equation}
where $(W_t)$ is an $n$-dimensional Brownian motion on a probability space 
$(\Omega, {\cal F}, \p)$ with corresponding augmented filtration $\mathbb{F} = ({\cal F}_t)$.
The set of controls ${\cal A}$ consists of pairs $(\alpha, \beta)$, where $(\alpha_t)_{0 \le t \le T}$ is an 
$\mathbb{F}$-predictable process with values in a compact subset $A \subseteq \mathbb{R}^k$ (the regular control) 
and an $l$-dimensional $\mathbb{F}$-adapted continuous process $(\beta_t)$ with nondecreasing components
such that $\beta_0 = 0$ and $\beta_T \in L^2(\p)$ (the singular control). The coefficients $\mu, \nu, \sigma$ 
and the performance functions $f,g,h$ can depend in a non-anticipative way on the paths of $X^{\alpha,\beta}$.
Depending on their exact specification, there might exist an optimal control in ${\cal A}$, or an
optimal control might require $(\beta_t)$ to jump and can only be approximated with controls in ${\cal A}$.

Let us denote by $C^d$ the space of all continuous functions from $[0,T]$ to $\mathbb{R}^d$ and set
$$
\|x\|_t := \sup_{0 \le s \le t} |x_s|, \quad x \in C^d,
$$
where $|.|$ is the Euclidean norm on $\mathbb{R}^d$. We make the following

\begin{Assumption} \label{ass} $\mbox{}$\\[-4mm]
\begin{enumerate}
\item[{\rm (i)}] $\sigma \colon [0,T] \times C^d \to \mathbb{R}^{d \times n}$ is a measurable function such that 
$$
\int_0^T |\sigma(t,0)|^2 dt < \infty \quad \mbox{and} \quad 
|\sigma(t,x) - \sigma(t,y)| \le L \|x-y\|_t \mbox{ for some constant } L \in \mathbb{R}_+.
$$
\item[{\rm (ii)}]
$\mu$ is of the form $\mu(t,x,a) = \eta(t,x) + \sigma(t,x) \tilde{\mu}(t,x,a)$ for measurable functions 
$\eta \colon [0,T] \times C^d \to \mathbb{R}^d$ and $\tilde{\mu} \colon [0,T] \times C^d \times A \to \mathbb{R}^n$ 
such that 
$$
\int_0^T |\eta(t,0)|^2 dt < \infty \quad \mbox{and} \quad 
|\eta(t,x) - \eta(t,y)| \le L \|x-y\|_t \mbox{ for some constant } L \in \mathbb{R}_+,
$$
$\tilde{\mu}(t,x,a)$ is bounded and continuous in $a$ and 
$$
\int_0^T \sup_{a \in A} |\mu(t,0,\alpha_t)|^2 dt < \infty, \quad
\sup_{a \in A} |\mu(t,x,a) - \mu(t,y,a)| \le L \|x-y\|_t \mbox{ for some } L \in \mathbb{R}_+.
$$
\item[{\rm (iii)}]
$\nu$ is of the form $\nu(t,x,a) = \sigma(t,x) \tilde{\nu}(t,x,a)$ for a measurable function
$\tilde{\nu} \colon [0,T] \times C^d \times A \to \mathbb{R}^{d \times n}$ such that 
$\tilde{\nu}(t,x,a)$ is bounded and continuous in $a$ and 
$$
\int_0^T \sup_{a \in A} |\nu(t,0,\alpha_t)|^2 dt < \infty, \quad
\sup_{a \in A} |\nu(t,x,a) - \nu(t,y,a)| \le L \|x-y\|_t \mbox{ for some } L \in \mathbb{R}_+.
$$
\item[{\rm (iv)}] 
The functions $f,g \colon [0,T] \times C^d \times A \to \mathbb{R}$ are measurable;
$f(t,x,a)$ and $g(t,x,a)$ are non-anticipative in $x$ and upper semicontinuous in $(x,a)$;
$h \colon C^d \to \mathbb{R}$ is upper semicontinuous in $x$; and the supremum in \eqref{problem} 
is finite.
\end{enumerate} 
\end{Assumption}

Under these assumptions, equation \eqref{dyn} has for every pair $(\alpha, \beta) \in {\cal A}$
a unique strong solution $(X^{\alpha,\beta}_t)$, and the SDE 
\be \label{Xsde}
dX_t = \eta(t,X) dt + \sigma(t,X) dW_t
\ee
has a unique strong solution $(X_t)$; see e.g. Protter (2004).

For our main representation result, Theorem \ref{thm:sbsde}, 
we need the mappings $p \colon [0,T] \times C^d \times \mathbb{R}^n \to \mathbb{R}$ 
and $q \colon [0,T] \times C^d \times \mathbb{R}^n \to \mathbb{R}^l$ given by 
$$
p(t,x,z) := \sup_{a \in A} \crl{f(t,x,a) + z \tilde{\mu}(t,x,a)} \quad \mbox{and} \quad
q_i(t,x,z) := \sup_{a \in A} \crl{g_i(t,x,a) + \sum_{j=1}^n z_j \tilde{\nu}_{ji}(t,x,a)}.
$$
In this paper, a supersolution of the BSDE
$$
Y_t = h(X) + \int_t^T p(s,X,Z_s) ds - \int_t^T Z_s dW_s 
\quad \mbox{with constraint} \quad q(t,X,Z_t) \in \mathbb{R}^l_-
$$
consists of a triplet $(Y,Z,K) \in {\cal S}^2 \times {\cal H}^2 \times {\cal K}^2$ such that
\begin{equation} \label{cBSDE}
Y_t = h(X) + \int_t^T p(s,X,Z_s) ds +(K_T-K_t) - \int_t^T Z_s dW_s  \quad \mbox{and}
\quad q(t,X,Z_t) \in \mathbb{R}^l_-  \quad \mbox{for all } t, 
\end{equation}
where 
\begin{itemize}
\item
${\cal S}^2$ is the space of $d$-dimensional RCLL $\mathbb{F}$-adapted processes $(Y_t)$ such that 
$\mathbb{E} \sup_{0 \le t \le T} |Y_t|^2 < \infty$,
\item
${\cal H}^2$ the space of $\mathbb{R}^{d \times n}$-valued $\mathbb{F}$-predictable processes $(Z_t)$ such that 
$\mathbb{E} \int_0^T |Z_t|^2 dt < \infty$, and
\item
${\cal K}^2$ the set of processes $(K_t)$ in ${\cal S}^2$ with nondecreasing components starting at $0$.
\end{itemize}
Moreover, we call $(Y,Z,K)$ a minimal supersolution of \eqref{cBSDE} if
$Y_t \le Y'_t$, $0 \le t \le T$, for any other supersolution $(Y',Z',K')$; see e.g. Peng (1999).

Our main result is the following:

\begin{theorem} \label{thm:sbsde}
The constrained BSDE \eqref{cBSDE} has a minimal supersolution $(Y,Z,K)$, and $Y_0 = I$.
\end{theorem}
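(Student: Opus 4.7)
The plan is to prove Theorem \ref{thm:sbsde} via the classical penalization / monotone-limit scheme combined with a standard BSDE--control duality argument based on Girsanov. For each $j \in \mathbb{N}$ I introduce the penalized driver
$$
p^j(t,x,z) := \sup_{a \in A}\Bigl\{f(t,x,a) + z\tilde\mu(t,x,a) + j\sum_{i=1}^l \bigl(g_i(t,x,a) + z\tilde\nu_i(t,x,a)\bigr)^+\Bigr\}.
$$
Assumption \ref{ass}(ii)--(iii) (boundedness of $\tilde\mu,\tilde\nu$) makes $p^j$ uniformly Lipschitz in $z$; after an initial truncation of $f,g$ to tame the merely upper semicontinuous dependence on $x$, the path-dependent Pardoux--Peng theorem produces a unique $(Y^j, Z^j) \in \mathcal{S}^2 \times \mathcal{H}^2$ solving $Y^j_t = h(X) + \int_t^T p^j(s, X, Z^j_s)\, ds - \int_t^T Z^j_s\, dW_s$, and $p^j \le p^{j+1}$ combined with comparison yields $Y^j \nearrow$.

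The second step is to identify $Y^j_0$ with the value $I^j$ of the approximating problem in which the singular control is restricted to $\beta^j_t = \int_0^t \dot\beta^j_s\, ds$ with $\dot\beta^j_s \in [0,j]^l$. Because $\sup_{\dot b \in [0,j]^l}\dot b\cdot v = j\sum_i v_i^+$, the Hamiltonian of this classical problem is exactly $p^j$. Given $(\alpha,\beta^j)$, the Girsanov change of measure with density $\mathcal{E}\bigl(\int_0^\cdot [\tilde\mu(s,X,\alpha_s) + \tilde\nu(s,X,\alpha_s)\dot\beta^j_s]\, dW_s\bigr)_T$ makes $X$ distributionally equal under the new measure to $X^{\alpha,\beta^j}$ under $\mathbb{P}$, and inserting $p^j(s,X,Z^j_s) \ge f(s,X,\alpha_s) + g(s,X,\alpha_s)\cdot \dot\beta^j_s + Z^j_s[\tilde\mu(s,X,\alpha_s) + \tilde\nu(s,X,\alpha_s)\dot\beta^j_s]$ into the BSDE yields $Y^j_0 \ge J(\alpha,\beta^j)$, where $J$ denotes the performance functional in \eqref{I}. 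A measurable selection of a near-maximizer in $p^j$, available thanks to continuity of $f,g,\tilde\mu,\tilde\nu$ in $a$ and compactness of $A$, supplies a matching control and gives the reverse inequality, so $Y^j_0 = I^j$.

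In the third step, $Y^j_0 = I^j \le I < \infty$ combined with standard a priori estimates yields uniform $\mathcal{S}^2 \times \mathcal{H}^2$-bounds on $(Y^j, Z^j)$. Setting $Y := \sup_j Y^j$ and letting $Z$ be a weak $\mathcal{H}^2$-limit point of $(Z^j)$, the process
$$
K_t := Y_0 - Y_t - \int_0^t p(s, X, Z_s)\, ds + \int_0^t Z_s\, dW_s
$$
is shown to belong to $\mathcal{K}^2$ and to satisfy $q(\cdot, X, Z) \in \mathbb{R}^l_-$ $dt \otimes d\mathbb{P}$-a.e.\ by Peng's monotone-limit argument: the uniform $L^1$-bound on $j\sum_i q_i^+(\cdot, X, Z^j)$ forces its limit to vanish. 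Hence $(Y, Z, K)$ is a supersolution of \eqref{cBSDE}. Minimality follows since any other supersolution $(Y', Z', K')$ satisfies $q(\cdot, X, Z') \le 0$, which forces $p^j(\cdot, X, Z') = p(\cdot, X, Z')$ pathwise, so $(Y', Z', K')$ is itself a supersolution of the BSDE with driver $p^j$, and the comparison theorem for supersolutions (absorbing the extra $dK'$ on the right-hand side) gives $Y' \ge Y^j$, hence $Y' \ge Y$. Combining with $I^j \uparrow I$, which is obtained by approximating arbitrary $\beta \in \mathcal{A}$ by Lipschitz-continuous processes of rate at most $j$, we conclude $Y_0 = \lim Y^j_0 = \lim I^j = I$.

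The main obstacle is this last convergence $\lim I^j = I$ in the path-dependent setting with only upper semicontinuous $f, g, h$. For arbitrary admissible $\beta$ one must build approximations $\beta^j$ with $\dot\beta^j_s \in [0,j]^l$ such that $\liminf_j J(\alpha, \beta^j) \ge J(\alpha, \beta)$ despite the one-sided regularity of the integrand $g$ and the possibly irregular reference measure $d\beta$; this mixes the Lipschitz stability of $\beta \mapsto X^{\alpha,\beta}$ from Assumption \ref{ass} (which controls the $\int f\,ds$ and $h(X)$ terms) with a delicate one-sided Fatou argument for $\int g\, d\beta^j$, ultimately made possible by the uniform integrability supplied by $\beta_T \in L^2(\mathbb{P})$.
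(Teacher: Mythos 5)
Your proposal follows essentially the same route as the paper: the penalized driver you write down is exactly the paper's $p^j$ (with the inner supremum over $b\in[0,j]^l$ evaluated explicitly), the identification $Y^j_0=I^j$ is done by the same Girsanov/weak-formulation duality, the passage to the minimal supersolution is by Peng's monotone limit theorem, and $I^j\uparrow I$ is the paper's Proposition \ref{prop:appr}. In fact you supply more detail than the paper at its two weakest points (the comparison/measurable-selection argument for minimality and the density argument behind $I^j\uparrow I$, which the paper leaves as one-line assertions), so the proposal is correct and, if anything, more complete.
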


The next result shows that problem \eqref{I} can be approximated by restricting the controls to 
regular piecewise constant controls: for $j \in \mathbb{N}$, denote by 
${\cal A}^j$ the set of all pairs $(\alpha, \beta) \in {\cal A}$ of the form 
$\alpha = \sum_{i=0}^{m-1} a_i 1_{(t_i,t_{i+1}]}$ and $\beta_t = \int_0^t b_s ds$, where 
$b = \sum_{i=0}^{m-1} b_i 1_{(t_i,t_{i+1}]}$, the $b_i$ are ${\cal F}_{t_i}$-measurable with values in $[0,j]^l$
and $0=t_0 < t_1< t_2 \dots < t_m = T$ is a deterministic partition of $[0,T]$. The 
corresponding control problem is 
\begin{equation} \label{jproblem}
I^j := \sup_{(\alpha, \beta) \in {\cal A}^j} 
\mathbb{E} \edg{\int_0^T f(t,X^{\alpha,\beta},\alpha_t) dt + \int_0^T g(t,X^{\alpha,\beta},\alpha_t) d\beta_t 
+ h(X^{\alpha,\beta})},
\end{equation}
and the following holds:

\begin{proposition} \label{prop:appr}
One has $I^j \uparrow I$ for $j \to \infty$.
\end{proposition}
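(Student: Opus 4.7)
The inclusions $\mathcal{A}^j \subseteq \mathcal{A}^{j+1} \subseteq \mathcal{A}$ give both the monotonicity of $(I^j)$ and the trivial bound $I^j \le I$, so the sequence converges and the task reduces to proving $\lim_j I^j \ge I$. My plan attacks this by a density argument in the control space: fix $\varepsilon > 0$, pick $(\alpha, \beta) \in \mathcal{A}$ whose expected payoff (the expression inside the supremum in \eqref{I}) is at least $I - \varepsilon$, and construct $(\alpha^n, \beta^n) \in \mathcal{A}^{j_n}$, with $j_n \uparrow \infty$, whose payoffs approach that of $(\alpha, \beta)$.

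For $\beta$ (continuous, nondecreasing, with $\beta_T \in L^2$), I would use an adapted backward-difference smoothing on a uniform partition $0 = t^n_0 < t^n_1 < \cdots < t^n_n = T$:
\[
\beta^n_t := \int_0^t b^n_s\, ds, \qquad b^n_t := n\brak{\beta_{t^n_{i-1}} - \beta_{t^n_{i-2}}} \wedge j_n \quad \text{on } (t^n_{i-1}, t^n_i],
\]
with $b^n \equiv 0$ on $(0, t^n_1]$ and $j_n \uparrow \infty$ chosen fast enough that the truncation eventually deactivates almost surely. Then $\beta^n$ is absolutely continuous, adapted, nondecreasing, with derivative in $[0, j_n]^l$, and continuity of $\beta$ yields $\beta^n \to \beta$ uniformly a.s. For $\alpha$, I would approximate the $A$-valued predictable process by piecewise constant $\mathcal{F}_{t^n_{i-1}}$-measurable $A$-valued $\alpha^n$ on the same partition (standard density of simple predictables combined with compactness of $A$). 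By construction $(\alpha^n, \beta^n) \in \mathcal{A}^{j_n}$, and standard SDE-stability estimates (Gronwall and BDG) yield $X^{\alpha^n,\beta^n} \to X^{\alpha,\beta}$ in $\mathcal{S}^2$.

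The main obstacle is transferring this path-level convergence to the expected payoff. Upper semicontinuity of $f, g, h$ a priori gives only the reverse-Fatou bound $\limsup_n \ldots \le J(\alpha, \beta)$, which is the \emph{wrong} direction for a sup problem. I expect the cleanest route around this to be the indirect one via Theorem \ref{thm:sbsde}: identify $I^j$ with the initial value $Y^j_0$ of the standard (penalized) BSDE
\[
Y^j_t = h(X) + \int_t^T p^j(s, X, Z^j_s)\, ds - \int_t^T Z^j_s\, dW_s,
\]
whose driver encodes the extended regular control problem over $(\alpha, b) \in A \times [0,j]^l$, namely
\[
p^j(s,x,z) := \sup_{a \in A} \crl{ f(s,x,a) + z\tilde{\mu}(s,x,a) + \sum_{i=1}^l j\brak{g_i(s,x,a) + \sum_{k=1}^n z_k\tilde{\nu}_{ki}(s,x,a)}_+ },
\]
and then invoke the classical monotone-convergence result $Y^j_0 \uparrow Y_0$ for penalized BSDEs, together with $Y_0 = I$ from Theorem \ref{thm:sbsde}. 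The identification $I^j = Y^j_0$ itself uses the density construction above and a standard BSDE comparison; the usc assumption on $f, g, h$ is absorbed through the monotone structure of the approximating BSDE values rather than through direct path-level continuity of the payoff. This usc-handling step is where I expect the chief technical difficulty to lie; the partition and SDE-stability arguments are essentially routine.
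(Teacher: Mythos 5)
Your treatment of the easy half (monotonicity and $I^j \le I$ from the inclusions $\mathcal{A}^j \subseteq \mathcal{A}^{j+1} \subseteq \mathcal{A}$) matches the paper, and your density construction for $\beta$ and $\alpha$ is exactly the kind of argument the paper gestures at when it writes ``by a density argument'' without supplying details. You are also right to flag the genuine difficulty: since $f$, $g$, $h$ are only upper semicontinuous, the convergence $X^{\alpha^n,\beta^n} \to X^{\alpha,\beta}$ in $\mathcal{S}^2$ yields only $\limsup_n J(\alpha^n,\beta^n) \le J(\alpha,\beta)$ (writing $J(\alpha,\beta)$ for the expected payoff in \eqref{I}), which is the wrong inequality when one needs to bound $\lim_j I^j$ from below by $I-\varepsilon$.

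The problem is that the workaround you propose is circular as the paper is organized. You want to conclude by combining $I^j = Y^j_0$ (Theorem \ref{thm:appr}), Peng's monotone limit theorem $Y^j_0 \uparrow Y_0$, and the identity $Y_0 = I$ quoted from Theorem \ref{thm:sbsde}. But the paper's proof of Theorem \ref{thm:sbsde} consists precisely of the chain ``$I^j \uparrow I$ (Proposition \ref{prop:appr}), $I^j = Y^j_0$, $Y^j_0 \uparrow Y_0$, hence $Y_0 = I$'' --- that is, it \emph{uses} the proposition you are trying to prove. To make your route legitimate you would need an independent proof of the inequality $Y_0 \ge I$, for instance a verification argument showing that every supersolution $(Y',Z',K')$ of the constrained BSDE \eqref{cBSDE} satisfies $Y'_0 \ge J(\alpha,\beta)$ for every $(\alpha,\beta) \in \mathcal{A}$, exploiting the constraint $q(t,X,Z'_t) \in \mathbb{R}^l_-$, the nondecrease of $K'$ and of $\beta$, and a Girsanov change of measure. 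Combined with $Y_0 = \lim_j Y^j_0 = \lim_j I^j \le I$, that would close Proposition \ref{prop:appr} and Theorem \ref{thm:sbsde} simultaneously and non-circularly. As written, your proposal establishes only the easy inequality $\lim_j I^j \le I$ and defers the substantive half to a statement whose proof presupposes it.
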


Moreover, since \eqref{jproblem} is a regular control problem, it admits a representation through a standard BSDE
\be \label{BSDE}
Y_t = h(X) + \int_t^T p^j(s,X,Z_s) ds - \int_t^T Z_s dW_s 
\ee
with a driver given by
$$
p^j(t,x,z) := \sup_{a \in A, b \in [0,j]^m} \crl{f(t,x,a) + z \tilde{\mu}(t,x,a) + [g(t,x,a) + z \tilde{\nu}(t,x,a)] b}.
$$
Compared to the constrained BSDE \eqref{cBSDE}, which gives the optimal value of the control problem \eqref{I},
the BSDE \eqref{BSDE} provides a characterization of the optimal value of \eqref{jproblem} as well as 
corresponding optimal controls.

\begin{theorem} \label{thm:appr}
For every $j \in \mathbb{N}$, BSDE \eqref{BSDE} has 
a unique solution $(Y^j,Z^j)$ in ${\cal S}^2 \times {\cal H}^2$. Moreover, $Y^j_0 = I^j$, and
for any pair of progressively measurable functionals $\hat{\alpha} \colon [0,T] \times C \to A$,
$\hat{b} \colon [0,T] \times C \to [0,j]^l$ satisfying 
$$
f(t,X,\hat{\alpha}_t(X)) + Z^j_t \tilde{\mu}(t,X,\hat{\alpha}_t(X)) + [g(t,X,\hat{\alpha}_t(X)) 
+ Z^j_t \tilde{\nu}(t,X,\hat{\alpha}_t)] \hat{b}_t(X)\\
= p^j(t,X,Z^j_t) \mbox{ $dt \times d\p$-a.e.},
$$
$\alpha_t = \hat{\alpha}_t(X^{\alpha,\beta})$ and $\beta_t = 
\int_0^t \hat{b}_s(X^{\alpha, \beta})ds$ defines a pair in ${\cal A}$ such that
$$
I^j = \mathbb{E} \edg{\int_0^T f(t,X^{\alpha,\beta},\alpha_t) dt + \int_0^T g(t,X^{\alpha,\beta},\alpha_t) d\beta_t 
+ h(X^{\alpha,\beta})}.
$$
\end{theorem}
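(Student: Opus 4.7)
The plan is to follow a classical BSDE verification scheme in three stages. For existence and uniqueness of $(Y^j, Z^j) \in {\cal S}^2 \times {\cal H}^2$ I invoke the Pardoux--Peng theorem: since $p^j(t, x, \cdot)$ is the supremum over the compact set $A \times [0, j]^l$ of an expression affine in $z$ with slope bounded by $|\tilde\mu| + j |\tilde\nu|$, it is Lipschitz in $z$ uniformly in $(t, x)$, and the required integrability of $h(X)$ and $p^j(\cdot, X, 0)$ follows from Assumption \ref{ass} combined with the finiteness of the supremum in \eqref{I}.

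For the upper bound $Y^j_0 \ge I^j$, I fix $(\alpha, \beta) \in {\cal A}^j$ with $\beta_t = \int_0^t b_s ds$, set $\Gamma^{\alpha,\beta}_s := \tilde\mu(s, X^{\alpha,\beta}, \alpha_s) + \tilde\nu(s, X^{\alpha,\beta}, \alpha_s) b_s$ (bounded, since $A$ is compact and $b \in [0,j]^l$), and define $\mathbb{Q}$ by $d\mathbb{Q}/d\mathbb{P} = {\cal E}(-\int_0^\cdot \Gamma^{\alpha,\beta}_s dW_s)_T$, which is a true martingale by Novikov. Under $\mathbb{Q}$, the process $\tilde W_t := W_t + \int_0^t \Gamma^{\alpha,\beta}_s ds$ is Brownian and the controlled dynamics reduce to $dX^{\alpha,\beta}_t = \eta(t, X^{\alpha,\beta})\,dt + \sigma(t, X^{\alpha,\beta})\,d\tilde W_t$. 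Strong uniqueness of \eqref{Xsde} implies that the $\mathbb{Q}$-law of $(X^{\alpha,\beta}, \tilde W)$ equals the $\mathbb{P}$-law of $(X, W)$, hence the BSDE \eqref{BSDE} on $(\Omega, {\cal F}, \mathbb{Q})$ with Brownian motion $\tilde W$ and terminal value $h(X^{\alpha,\beta})$ admits a solution $(\tilde Y, \tilde Z)$ satisfying $\tilde Y_0 = Y^j_0$. Rewriting this $\mathbb{Q}$-BSDE under $\mathbb{P}$ via $d\tilde W = dW + \Gamma^{\alpha,\beta}\,ds$, taking $\mathbb{P}$-expectation, and invoking the defining inequality
\[
p^j(s, X^{\alpha,\beta}, \tilde Z_s) \ge f(s, X^{\alpha,\beta}, \alpha_s) + g(s, X^{\alpha,\beta}, \alpha_s) b_s + \tilde Z_s \Gamma^{\alpha,\beta}_s
\]
yields $Y^j_0 \ge \mathbb{E}_\mathbb{P}\bigl[h(X^{\alpha,\beta}) + \int_0^T (f + g b)\,ds\bigr]$; supremizing over ${\cal A}^j$ produces $Y^j_0 \ge I^j$.

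For the reverse inequality and the explicit selectors, I would invoke a Benes-type measurable selection theorem applied to the map $(a, b) \mapsto f(t, x, a) + g(t, x, a) b + z(\tilde\mu(t, x, a) + \tilde\nu(t, x, a) b)$, which is upper semicontinuous on the compact set $A \times [0, j]^l$ by Assumption \ref{ass}(iv); specializing $z$ to $Z^j_t$ yields progressively measurable functionals $\hat\alpha, \hat b$ attaining the supremum defining $p^j$. With this choice, every inequality in the previous paragraph becomes an equality and the associated pair $(\alpha, \beta)$ satisfies $J(\alpha, \beta) = Y^j_0$, hence $Y^j_0 \le I^j$. The main technical obstacle is rigorously realizing the feedback relation $\alpha_t = \hat\alpha_t(X^{\alpha,\beta})$ on $(\Omega, {\cal F}, \mathbb{P})$: the selector is generally only measurable, not Lipschitz in its path argument, so the implicit SDE need not have a strong solution by the standard Lipschitz theory. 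The standard workaround is to define $(\alpha_t, b_t) := (\hat\alpha_t(X), \hat b_t(X))$ on $(\Omega, {\cal F}, \mathbb{P})$ and use the Girsanov change of measure of the upper-bound step in reverse: on the resulting equivalent space, $X$ has the law that $X^{\alpha,\beta}$ would have under $\mathbb{P}$, and a Yamada--Watanabe type argument transfers admissibility back to $\mathbb{P}$ while preserving the value identity.
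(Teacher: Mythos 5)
Your overall strategy coincides with the paper's: a Girsanov change of measure to compare $Y^j_0$ with the value of an arbitrary admissible pair (you remove the drift from $X^{\alpha,\beta}$ so that it mimics $X$; the paper goes the other way, tilting $\p$ to $\p^{\alpha,\beta}$ so that $X$ mimics $X^{\alpha,\beta}$, and uses Lemma \ref{lemma:filtration} plus the predictable representation theorem and BSDE comparison -- the two directions are equivalent), followed by a measurable selection and a transfer-of-law argument for the feedback pair. The closed-loop issue you flag at the end is real, and your proposed resolution (define the controls along $X$, change measure, identify laws) is precisely the one the paper uses, at essentially the same level of rigor.

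There is, however, one genuine gap in your lower bound. The class ${\cal A}^j$ over which $I^j$ is defined consists of \emph{piecewise constant} controls on a deterministic grid: $\alpha=\sum_i a_i 1_{(t_i,t_{i+1}]}$ and $b=\sum_i b_i 1_{(t_i,t_{i+1}]}$ with ${\cal F}_{t_i}$-measurable coefficients. The pair produced by your Benes selector, $\alpha_t=\hat\alpha_t(X)$ and $b_t=\hat b_t(X)$ (equivalently $\tilde\alpha(t,X,Z^j_t)$, $\tilde b(t,X,Z^j_t)$), is merely predictable with values in $A\times[0,j]^l$ and is in general \emph{not} an element of ${\cal A}^j$. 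Hence from ``every inequality becomes an equality'' you obtain $Y^j_0\le\sup J$ over the larger class of bounded predictable controls, but not $Y^j_0\le I^j$, and the chain $I^j\le Y^j_0\le I^j$ does not close; the same issue contaminates the final identity $I^j=\mathbb{E}[\cdots]$ for the feedback pair. The missing step, which the paper supplies explicitly, is to approximate the selector-based pair in $L^2$ by pairs $(\alpha^n,\beta^n)\in{\cal A}^j$ and to show that the corresponding weak-formulation values converge to that of the limit pair, using the continuity and integrability hypotheses of Assumption \ref{ass}. The remainder of your argument (Lipschitz driver via the supremum of uniformly Lipschitz affine maps, Novikov, the pointwise inequality $p^j\ge f+gb+z\Gamma$ and its saturation under the selector) is correct.
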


Our last result concerns the continuity of the minimal supersolution of \eqref{cBSDE} at the final time $T$.
Due to the constraint $q$, $Y$ might jump downwards at $T$. This can be avoided by modifying $h$.
Define the face-lift $\hat{h} : C \to \mathbb{R}$ as follows 
$$
\hat{h}(x) := \inf\{h(x+\nu(T,x)l{\bf 1}_{\{T\}})+g(T,x)l,\quad l\in \mathbb R_+^d\}.
$$
Then the following holds follwing an argument from \cite{Bou14}.

\begin{proposition} \label{prop:facelift}
The BSDE 
\be \label{flBSDE}
Y_t = \hat{h}(X) + \int_t^T p(s,X,Z_s) ds - \int_t^T Z_s dW_s \quad 
\mbox{with constraint} \quad q(t,X,Z_t) \in \mathbb{R}^l_-
\ee
admits a minimal supersolution $(\hat{Y}, \hat{Z}, \hat{K})$, and one has 
$\Delta \hat{Y}_T = 0$ as well as $(\hat{Y}_t,\hat{Z}_t,\hat{K}_t) = (Y_t,Z_t,K_t)$ for $t \in [0,T)$, where 
$(Y,Z,K)$ is the minimal supersolution of \eqref{cBSDE}.
\end{proposition}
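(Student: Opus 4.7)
The plan is a two-sided comparison whose crux is the almost sure lower bound $Y_{T-} \geq \hat h(X)$. Existence of the minimal supersolution $(\hat Y, \hat Z, \hat K)$ of \eqref{flBSDE} follows from Theorem \ref{thm:sbsde} applied with $h$ replaced by $\hat h$, once one checks upper semicontinuity of $\hat h$ in $x$ (inherited from $h$ together with continuity of $\nu(T, \cdot)$ and $g(T, \cdot)$, after localizing the optimization in the face-lift formula to a compact set of $l$'s via coercivity implied by the constraint $q \in \mathbb{R}^l_-$) and finiteness of the associated supremum (immediate from $\hat h \geq h$, which follows from the face-lift formula by the trivial choice of $l$). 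For the easy direction $\hat Y \geq Y$ on $[0, T)$, given any supersolution $(Y', Z', K')$ of \eqref{flBSDE}, the triplet $(Y', Z', \tilde K')$ with $\tilde K'_t := K'_t$ on $[0, T)$ and $\tilde K'_T := K'_T + (\hat h(X) - h(X))$ is a supersolution of \eqref{cBSDE}: the BSDE equation rearranges, the $Z$-constraint is inherited, and $\tilde K'$ is nondecreasing because $\hat h \geq h$. Minimality of $Y$ then yields $Y_t \leq Y'_t$ for $t < T$, and specializing to $Y' = \hat Y$ gives the inequality.

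The main technical step is to establish the lower bound $Y_{T-} \geq \hat h(X)$ a.s. I would fix $l \in \mathbb{R}^l_+$ and show
\begin{equation*}
Y_{T-} \geq h\bigl(X + \nu(T, X) l \mathbf{1}_{\{T\}}\bigr) + g(T, X) l \quad \text{a.s.},
\end{equation*}
then take the supremum over a countable dense family of $l$'s using upper semicontinuity to recover $\hat h(X)$. For the per-$l$ inequality, I would exploit the supersolution property of $Y$ via a conditional form of Theorem \ref{thm:sbsde} applied on $[T - \varepsilon, T]$: for each $\varepsilon > 0$ and any base control $(\alpha, \beta) \in \mathcal{A}$, the perturbed control $\beta^\varepsilon$ with $d\beta^\varepsilon_t = (l/\varepsilon) dt$ on $[T - \varepsilon, T]$ satisfies $Y_{T-\varepsilon} \geq \mathbb{E}[\int_{T-\varepsilon}^T f\, dt + \int_{T-\varepsilon}^T g\, d\beta^\varepsilon + h(X^{\alpha, \beta^\varepsilon}) \mid \mathcal{F}_{T-\varepsilon}]$. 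Standard stability estimates for \eqref{dyn} yield $X^{\alpha, \beta^\varepsilon}_T \to X_T + \nu(T, X) l$ in $L^2$ as $\varepsilon \downarrow 0$, and $\int_{T-\varepsilon}^T g\, d\beta^\varepsilon \to g(T, X) l$; combined with left-continuity of $Y$ at $T$, a careful application of (conditional) Fatou that exploits the monotonicity $Y \geq Y^j$ inherited from the penalization approximation underlying Theorem \ref{thm:sbsde} gives the per-$l$ inequality.

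With this lower bound in hand, I would close the argument by constructing a supersolution of \eqref{flBSDE} dominated by $Y$: set $\tilde Y_t := Y_t$, $\tilde Z := Z$, $\tilde K_t := K_t$ for $t \in [0, T)$, and $\tilde Y_T := \hat h(X)$, $\tilde K_T := K_T - (\hat h(X) - h(X))$. Direct computation confirms the equation in \eqref{flBSDE}, the $Z$-constraint is inherited from $(Y, Z, K)$, and $\tilde K$ is nondecreasing precisely because $\Delta K_T = Y_{T-} - h(X) \geq \hat h(X) - h(X)$ by the key claim. Minimality of $\hat Y$ forces $\hat Y \leq \tilde Y$ on $[0, T]$, and combined with $\hat Y \geq Y$ this yields $\hat Y = Y$ on $[0, T)$ and $\hat Y_T = \hat h(X)$. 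Finally, $\hat Y_{T-} = Y_{T-} = \hat h(X) = \hat Y_T$ gives $\Delta \hat Y_T = 0$. The main obstacle is the lower-bound argument in the second paragraph: combining the singular concentration of $\beta^\varepsilon$ with the upper semicontinuity of $h$ requires delicate handling, particularly when passing to the limit in the conditional expectation of $h(X^{\alpha, \beta^\varepsilon})$ at a non-Lebesgue point of the integrand.
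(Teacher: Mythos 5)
The paper contains no proof of Proposition \ref{prop:facelift} at all --- it is deferred wholesale to the argument of \cite{Bou14} --- so your proposal can only be measured against what such an argument must contain. About half of it is in place and essentially correct: the two comparison steps identifying $\hat Y$ with $Y$ on $[0,T)$ are sound (modulo the small fix that in the first direction you must also reset $Y'_T$ to $h(X)$, since a supersolution of \eqref{cBSDE} is required to satisfy the equation at $t=T$), and perturbing the control by an absolutely continuous push $d\beta^\varepsilon_t=(l/\varepsilon)\,dt$ on $[T-\varepsilon,T]$ is the right mechanism for the lower bound $Y_{T-}\ge \hat h(X)$. Two bookkeeping points: you are reading the face-lift as a supremum over $l\in\mathbb{R}^l_+$ (which is what ``smallest majorant'' and your inequality $\hat h\ge h$ require; the displayed infimum in the paper is evidently a typo), and finiteness of the control problem with terminal reward $\hat h$ is \emph{not} ``immediate from $\hat h\ge h$'' --- that inequality makes the supremum larger, so finiteness needs the separate observation that the terminal push is already approximable within $\mathcal{A}$, whence the $\hat h$-problem has the same value $I$.

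The genuine gap is in your final paragraph: you write $\hat Y_{T-}=Y_{T-}=\hat h(X)$, but everything you have established yields only $Y_{T-}\ge\hat h(X)$. The assertion $\Delta\hat Y_T=0$ is exactly equivalent to the reverse inequality $Y_{T-}\le\hat h(X)$, and nothing in your construction delivers it: your $\tilde Y$ is a supersolution of \eqref{flBSDE} with $\Delta\tilde K_T=Y_{T-}-\hat h(X)\ge 0$, which is perfectly consistent with a strict downward jump of $\hat Y$ at $T$, and the monotone limit $Y=\lim_j Y^j$ gives no uniform-in-$j$ modulus of continuity at $T$ that would let you interchange the limits $j\to\infty$ and $t\uparrow T$. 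This upper bound is the hard half of the face-lift result in \cite{Bou14}; it is obtained there by building an explicit supersolution near $T$ out of $\hat h$ itself, exploiting that $\hat h$ is a fixed point of the face-lifting operation and regular (Lipschitz) enough to serve as a barrier. A second, related defect sits inside your lower-bound step: Assumption \ref{ass} makes $h$ only \emph{upper} semicontinuous, whereas passing to the limit in $\mathbb{E}[h(X^{\alpha,\beta^\varepsilon})\mid\mathcal{F}_{T-\varepsilon}]$ from below requires a $\liminf$ inequality, i.e.\ lower semicontinuity of $h$ along the concentrating perturbations. You flag this as ``delicate,'' but it is not merely delicate: with u.s.c.\ alone the step can fail, and the cited reference assumes Lipschitz terminal data precisely to rule this out.
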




\section{Proofs}
\label{sec:proofs}

We start with the\\[2mm]
{\bf Proof of Proposition \ref{prop:appr}}. It is straightforward to see that $I^j$ is nondecreasing and
$I^j\le I$. By a density argument, we can prove that 
$$\lim_j I^j=I.$$
\qed

Next, we show that the approximate problems \eqref{jproblem} admit a weak formulation.
To do that, we note that by Girsanov's theorem, the process 
$$
W^{\alpha, \beta}_t := W_t - \int_0^t \edg{\tilde{\mu}(s,X,\alpha_s) + \tilde{\nu}(s,X,\alpha_s) b_s} ds.
$$
is for every pair $(\alpha, \beta) \in {\cal A}^j$, a Brownian motion under the measure $\p^{\alpha, \beta}$ given by 
$$
\frac{d\p^{\alpha, \beta}}{d\p} = {\cal E} \brak{\int_0^. \edg{\tilde{\mu}(s,X,\alpha_s) + \tilde{\nu}(s,X,\alpha_s) b_s} dW_s}.
$$
Moreover, the following holds:

\begin{lemma} \label{lemma:filtration}
For all $(\alpha, \beta) \in {\cal A}^j$, the augmented filtration generated by $W^{\alpha, \beta}$ equals $\mathbb{F}$.
\end{lemma}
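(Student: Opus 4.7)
The first inclusion is immediate: the drift $\int_0^\cdot [\tilde{\mu}(s,X,\alpha_s) + \tilde{\nu}(s,X,\alpha_s) b_s]\,ds$ is $\mathbb{F}$-adapted (since $X$, $\alpha$ and $b$ all are), so $W^{\alpha,\beta}$ is itself $\mathbb{F}$-adapted and its augmented filtration, which I will denote $\mathbb{F}^{\alpha,\beta} = (\mathcal{F}^{\alpha,\beta}_t)$, is contained in $\mathbb{F}$. The augmentation is unambiguous because $\p$ and $\p^{\alpha,\beta}$ are equivalent (the Dol\'eans exponential defining $d\p^{\alpha,\beta}/d\p$ is strictly positive), so the two measures share null sets.

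For the reverse inclusion, the plan is to argue by induction on $i = 0, 1, \ldots, m$ that $\mathcal{F}_{t_i} \subseteq \mathcal{F}^{\alpha,\beta}_{t_i}$. Girsanov's transformation rewrites the SDE \eqref{Xsde} for $X$ as
\begin{equation*}
dX_t = [\mu(t,X,\alpha_t) + \nu(t,X,\alpha_t)\, b_t]\, dt + \sigma(t,X)\, dW^{\alpha,\beta}_t,
\end{equation*}
whose coefficients are non-anticipative and Lipschitz in the path variable by Assumption~\ref{ass}. On $(t_i, t_{i+1}]$ the controls equal the random variables $a_i$ and $b_i$, which are $\mathcal{F}_{t_i}$-measurable and hence, by the induction hypothesis, $\mathcal{F}^{\alpha,\beta}_{t_i}$-measurable. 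Starting from $X_{t_i}$ (also $\mathcal{F}^{\alpha,\beta}_{t_i}$-measurable by the induction hypothesis), standard strong existence and uniqueness for path-dependent SDEs with Lipschitz coefficients and random inputs yields a solution on $(t_i, t_{i+1}]$ adapted to $\mathbb{F}^{\alpha,\beta}$.

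Once $X$ on $[0, t_{i+1}]$ is known to be $\mathbb{F}^{\alpha,\beta}$-adapted, the defining identity
\begin{equation*}
W_t = W^{\alpha,\beta}_t + \int_0^t [\tilde{\mu}(s,X,\alpha_s) + \tilde{\nu}(s,X,\alpha_s)\, b_s]\, ds
\end{equation*}
shows that $W$ restricted to $[0,t_{i+1}]$ is $\mathcal{F}^{\alpha,\beta}_{t_{i+1}}$-measurable, which closes the induction and yields $\mathcal{F}_{t_{i+1}} \subseteq \mathcal{F}^{\alpha,\beta}_{t_{i+1}}$. Applying the same SDE-plus-identity argument at an arbitrary time $t \in (t_i, t_{i+1})$ gives $\mathcal{F}_t \subseteq \mathcal{F}^{\alpha,\beta}_t$ for every $t \in [0,T]$, and combined with the first inclusion this proves $\mathbb{F} = \mathbb{F}^{\alpha,\beta}$.

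The main technical point is the strong solvability of the SDE on each subinterval with a random initial condition and random piecewise-constant control parameters, which is standard using the uniform Lipschitz structure of $\eta$, $\sigma$, $\tilde{\mu}$, $\tilde{\nu}$ together with the boundedness of the latter two on $A$. The piecewise-constant structure of $(\alpha, \beta) \in \mathcal{A}^j$ is essential here: it is what lets the induction close, because on each subinterval the control parameters are frozen to $\mathcal{F}_{t_i}$-measurable random variables. For a general $\mathbb{F}$-predictable control, $\alpha_t$ and $b_t$ could themselves depend on $W$ through the full past, and the reconstruction of $X$ and of $W$ could no longer be decoupled.
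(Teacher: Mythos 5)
Your proof is correct and follows essentially the same route as the paper: the easy inclusion from adaptedness of the drift, then an induction over the partition points using that on each subinterval the frozen controls are measurable with respect to the smaller filtration, so $X$ solves an SDE driven by $W^{\alpha,\beta}$ with $\mathcal{F}^{\alpha,\beta}_{t_i}$-measurable inputs, after which $W$ is recovered from the Girsanov identity. Your added remarks on the equivalence of $\p$ and $\p^{\alpha,\beta}$ (so the augmentation is unambiguous) and on why the piecewise-constant structure is essential are sound refinements of the same argument.
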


\begin{proof}
Denote the augmented filtration of $W^{\alpha,\beta}$ by
$\mathbb{F}^{\alpha,\beta} = ({\cal F}^{\alpha,\beta}_t)$.
Since $X$ is a strong solution of the SDE \eqref{Xsde}, it is $\mathbb{F}$-adapted. So it follows from the 
definition of $W^{\alpha,\beta}$ that $\mathbb{F}^{\alpha,\beta}$ is contained in $\mathbb{F}$.

On the other hand, one has $\alpha = \sum_{i=0}^{m-1} a_i 1_{(t_i,t_{i+1}]}$ and 
$b = \sum_{i=0}^{m-1} b_i 1_{(t_i,t_{i+1}]}$ for $a_i$ and $b_i$ ${\cal F}_{t_i}$-measurable. In particular $a_0$ and $b_0$ are deterministic. So it follows from Assumption \ref{ass} that on $[0,t_1]$, $(X_t)$ is the unique strong solution of 
$$
dX_t = \mu(t,X,\alpha_t) dt + \nu(t,X,\alpha_t)b_t dt + \sigma(t,X) dW^{\alpha,\beta}_t, \quad X_0 = x.
$$
Hence, $(X_t)_{t \in [t_0,t_1]}$ is $({\cal F}^{\alpha, \beta}_t)_{t \in [t_0,t_1]}$-adapted, from which it follows that
$$
W_t = W^{\alpha, \beta}_t + \int_0^t \edg{\tilde{\mu}(s,X,\alpha_s) + \tilde{\nu}(s,X,\alpha_s) b_s} ds, 
\quad t \in [0,t_1],
$$
is $({\cal F}^{\alpha,\beta}_t)_{t \in [0,t_1]}$-adapted. This shows that $a_1$ and $b_1$ are 
${\cal F}^{\alpha, \beta}_{t_1}$-measurable. Now the lemma follows by induction over $i$.
\end{proof}

Using Lemma \ref{lemma:filtration}, one can derive the following weak formulation of problem \eqref{jproblem}:

\begin{lemma} One has
\begin{equation} \label{weak}
I^j = \sup_{(\alpha,\beta) \in {\cal A}^j} \mathbb{E}^{\alpha,\beta}
\edg{\int_0^T f(t,X,\alpha_t) dt + \int_0^T g(t,X,\alpha_t) d\beta_t + h(X)},
\end{equation}
where $\mathbb{E}^{\alpha,\beta}$ denotes the expectation under $\p^{\alpha,\beta}$.
\end{lemma}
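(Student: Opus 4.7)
The plan is to prove \eqref{weak} by establishing both $I^j \le \sup_{{\cal A}^j}\E^{\alpha,\beta}[\cdots]$ and the reverse inequality; in each direction, given a control in one formulation I exhibit a control in the other with identical expected payoff. The bridge is Lemma~\ref{lemma:filtration}, which lets me represent any admissible control as a measurable functional of either $W$ (under $\p$) or $W^{\alpha,\beta}$ (under $\p^{\alpha,\beta}$), combined with strong existence and uniqueness for \eqref{dyn}, which makes the solution a measurable functional of the driving Brownian motion and the control.

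\emph{Weak $\le$ strong.} Fix $(\alpha,\beta)\in{\cal A}^j$. By Lemma~\ref{lemma:filtration}, each $a_i$ and $b_i$ is ${\cal F}^{W^{\alpha,\beta}}_{t_i}$-measurable, so I can choose Borel functionals $\phi_i:C([0,t_i],\mathbb{R}^n)\to A$ and $\psi_i:C([0,t_i],\mathbb{R}^n)\to [0,j]^l$ with $a_i=\phi_i(W^{\alpha,\beta}|_{[0,t_i]})$ and $b_i=\psi_i(W^{\alpha,\beta}|_{[0,t_i]})$ $\p^{\alpha,\beta}$-a.s. Setting $\hat a_i:=\phi_i(W|_{[0,t_i]})$, $\hat b_i:=\psi_i(W|_{[0,t_i]})$ yields $(\hat\alpha,\hat\beta)\in{\cal A}^j$. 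Since $W$ under $\p$ and $W^{\alpha,\beta}$ under $\p^{\alpha,\beta}$ are both standard Brownian motions, applying the strong-solution functional to each produces processes with identical joint law; hence the joint distribution of $(X^{\hat\alpha,\hat\beta},\hat\alpha,\hat\beta)$ under $\p$ coincides with that of $(X,\alpha,\beta)$ under $\p^{\alpha,\beta}$, and evaluating the payoff gives $\E^{\alpha,\beta}[\cdots]=\E[\cdots(\hat\alpha,\hat\beta)]\le I^j$.

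\emph{Strong $\le$ weak.} Fix $(\alpha,\beta)\in{\cal A}^j$ with representations $a_i=\phi_i(W|_{[0,t_i]})$ and $b_i=\psi_i(W|_{[0,t_i]})$. I construct $(\hat\alpha,\hat\beta)\in{\cal A}^j$ recursively: set $\hat a_0=a_0$, $\hat b_0=b_0$, and given $(\hat\alpha,\hat b)|_{[0,t_i]}$, define $\hat a_i:=\phi_i(W^{\hat\alpha,\hat\beta}|_{[0,t_i]})$, $\hat b_i:=\psi_i(W^{\hat\alpha,\hat\beta}|_{[0,t_i]})$. The recursion is well-posed because $W^{\hat\alpha,\hat\beta}|_{[0,t_i]}$ depends only on $(\hat\alpha,\hat b)|_{[0,t_i]}$ (besides $W$ and $X$), which is already known at step $i$. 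Under $\p^{\hat\alpha,\hat\beta}$, the Brownian motion $W^{\hat\alpha,\hat\beta}$ recovers $(\hat\alpha,\hat\beta)$ via the same $\phi_i,\psi_i$ that recover $(\alpha,\beta)$ from $W$ under $\p$, so the analogous matching-of-joint-laws argument yields $\E[\cdots(\alpha,\beta)]=\E^{\hat\alpha,\hat\beta}[\cdots(\hat\alpha,\hat\beta)]\le\sup_{{\cal A}^j}\E^{\cdot}[\cdots]$.

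The main technical obstacle is the recursive construction in the strong-to-weak direction: one has to verify that the implicit equation $\hat a_i=\phi_i(W^{\hat\alpha,\hat\beta}|_{[0,t_i]})$ unravels interval by interval rather than producing a genuine fixed-point problem. This relies crucially on the piecewise-constant structure of ${\cal A}^j$ on the deterministic grid $0=t_0<\cdots<t_m=T$ and on the ${\cal F}_{t_i}$-adaptedness of $\hat a_i,\hat b_i$, which together allow one to compute $W^{\hat\alpha,\hat\beta}|_{[0,t_i]}$ from data already frozen by time $t_i$, before the value of the control on $(t_i,t_{i+1}]$ is specified.
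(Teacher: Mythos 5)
Your proposal is correct and follows essentially the same route as the paper: both directions are handled by transporting the piecewise-constant control between the two Brownian motions $W$ and $W^{\alpha,\beta}$ interval by interval on the deterministic grid, using Lemma~\ref{lemma:filtration} for one direction and strong existence/uniqueness to match joint laws, with the recursion unravelling precisely because $a_0,b_0$ are deterministic and each $a_i,b_i$ is frozen at $t_i$. The only difference is presentational: you phrase the law-matching globally via the strong-solution functional of the driving Brownian motion, while the paper propagates the equality of distributions interval by interval.
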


\begin{proof}
For all $(\alpha, \beta) \in {\cal A}^j$, $X^{\alpha, \beta}$ is the unique strong solution of
$$
dX^{\alpha, \beta}_t = \mu(t,X^{\alpha,\beta},\alpha_t) dt + \nu(s,X^{\alpha, \beta},\alpha_t) b_t dt
+ \sigma(t,X^{\alpha, \beta}) dW_t, \quad X_0 = x,
$$
and $X$ the unique strong solution of 
$$
dX_t = \mu(t,X,\alpha_t) dt + \nu(s,X,\alpha_t) b_t dt + \sigma(t,X) dW^{\alpha, \beta}_t, \quad X_0 = x.
$$
Since $a_0$ and $b_0$ are deterministic, $(\alpha_t, \beta_t ,X_t)_{t \in [0,t_1]}$ has the same distribution 
under the measure $\p^{\alpha, \beta}$ as $(\alpha_t, \beta_t, X^{\alpha, \beta}_t)_{t \in [0,t_1]}$ under $\p$.
Moreover, $a_1$ and $b_1$ are functions of $(W_t)_{t \in [0,t_1]}$. So if one defines $\tilde{a}_1$ and $\tilde{b}_1$ 
to be the same functions of $(W^{\alpha,\beta}_t)_{t \in [0,t_1]}$, then $(\tilde{\alpha}_t, \tilde{\beta}_t, X_t)_{t \in [t_1,t_2]}$ 
has the same distribution under $\p^{\tilde{\alpha}, \tilde{\beta}}$ as
$(\alpha_t, \beta_t, X^{\alpha, \beta}_t)_{t \in [t_1,t_2]}$ under $\p$. Continuing like this, 
one sees that for every pair $(\alpha, \beta) \in {\cal A}^j$, there exists a pair 
$(\tilde{\alpha}, \tilde{\beta}) \in {\cal A}^j$ such that $(\tilde{\alpha}, \tilde{\beta}, X)$ 
has the same distribution under $\p^{\tilde{\alpha}, \tilde{\beta}}$ as $(\alpha, \beta, X^{\alpha, \beta})$ under $\p$.
Conversely, it can be deduced from Lemma \ref{lemma:filtration} with the same argument that 
for every pair $(\alpha, \beta) \in {\cal A}^j$, there exists a pair $(\tilde{\alpha}, \tilde{\beta}) \in {\cal A}^j$ 
such that $(\tilde{\alpha}, \tilde{\beta}, X^{\tilde{\alpha}, \tilde{\beta}})$ has the same distribution under $\p$ 
as $(\alpha, \beta, X)$ under $\p^{\alpha, \beta}$. This proves the lemma.
\end{proof}

$\mbox{}$\\
Now, we are ready to give the\\[2mm]
{\bf Proof of Theorem \ref{thm:appr}}\\
It follows from our assumptions that the BSDE \eqref{BSDE} satisfies the standard conditions.
So it has a unique solution $(Y^j,Z^j)$ in ${\cal S}^2 \times {\cal H}^2$; see e.g. ...
For each pair $(\alpha, \beta) \in {\cal A}^j$, we set
$$
Y^{\alpha,\beta}_t := \mathbb{E}^{\alpha,\beta}
\edg{\int_t^T f(s,X,\alpha_s) ds + \int_t^T g(t,X,\alpha_s) d\beta_s + h(X) \mid {\cal F}_t}.
$$
By Lemma \ref{lemma:filtration} and the predictable representation theorem 
that there exists an $\mathbb{R}^n$-valued $\mathbb{F}$-predictable process $Z^{\alpha,\beta}$ such that 
$$
\mathbb{E}^{\alpha,\beta}
\edg{\int_0^T f(s,X,\alpha_s) ds + \int_0^T g(t,X,\alpha_s) d\beta_s + h(X) \mid {\cal F}_t}
= Y^{\alpha,\beta}_0 + \int_0^t Z^{\alpha,\beta}_s dW^{\alpha,\beta}_s.
$$
Hence,
\beas
Y^{\alpha, \beta}_t 
&=& h(X) + \int_t^T f(s,X,\alpha_s) ds + \int_t^T g(s,X,\alpha_s) d\beta_s -
\int_t^T Z^{\alpha,\beta}_s dW^{\alpha,\beta}_s\\
&=& h(X) + \int_t^T \crl{f(s,X,\alpha_s) + Z^{\alpha,\beta}_s \tilde{\mu}(s,X,\alpha_s)} ds\\
&& + \int_t^T \crl{g(s,X,\alpha_s) + Z^{\alpha,\beta}_s \tilde{\nu}(s,X,\alpha_s)} d\beta_s -
\int_t^T Z^{\alpha,\beta}_s dW_s.
\eeas
By a comparison result for BSDEs (see e.g. ...), one has $Y^j \ge Y^{\alpha,\beta}$. On the other hand,
it can be deduced from a measurable selection argument that there exist progressively measurable functions 
$$
\tilde{\alpha} : [0,T] \times C \times \mathbb{R}^n \to A \quad \mbox{and} \quad 
\tilde{b} : [0,T] \times C \times \mathbb{R}^n \to [0,j]^l
$$
such that 
$$
f(t,x,\tilde{\alpha}(t,x,z)) + z \tilde{\mu}(t,x,\tilde{\alpha}(t,x,z)) 
 + [g(t,x,\tilde{\alpha}(t,x,z)) + z \tilde{\nu}(t,x,\tilde{\alpha}(t,x,z))] \tilde{b}(t,x,z) = p^j(t,x,z).
$$
$\alpha_t = \tilde{\alpha}(t,X,Z^j)$ and $\beta_t = \int_0^t \tilde{b}(s,X,Z^j_s) ds$ defines a pair in ${\cal A}$ 
that can be approximated by a sequence of pairs $(\alpha^n,\beta^n) \in {\cal A}^j$ in $L^2$. Then 
$$
\mathbb{E}^{\alpha^n,\beta^n}
\edg{\int_0^T f(s,X,\alpha^n_s) ds + \int_0^T g(t,X,\alpha^n_s) d\beta^n_s + h(X)}
$$
converges to
$$
\mathbb{E}^{\alpha,\beta} 
\edg{\int_0^T f(s,X,\alpha_s) ds + \int_0^T g(t,X,\alpha_s) d\beta_s + h(X)},
$$
and
\beas
Y^j_0 &=& h(X) + \int_0^T p^j(s,X,Z^j_s) ds - \int_0^T Z^j_s dW_s\\
&=& h(X) + \int_0^T \crl{f(s,X,\alpha_s) + Z^j_s \tilde{\mu}(s,X,\alpha_s)} ds\\
&& + \int_0^T \crl{g(s,X,\alpha_s) + Z^j_s \tilde{\nu}(s,X,\alpha_s)} b_s ds - \int_0^T Z^j_s dW_s\\
&=& h(X) + \int_0^T \crl{f(s,X,\alpha_s) + g(s,X,\alpha_s) b_s} ds - \int_0^T Z^j_s dW^{\alpha,\beta}_s \\
&=& \mathbb{E}^{\alpha,\beta} \edg{\int_0^T f(s,X,\alpha_s) ds + \int_0^T g(t,X,\alpha_s) d\beta_s + h(X)}.
\eeas
This shows that $Y^j_0 = I^j$.

Finally, if $\hat{\alpha} \colon [0,T] \times C \to A$ and $\hat{b} \colon [0,T] \times C \to [0,j]^l$ are
progressively measurable functionals such that
$$
f(t,X,\hat{\alpha}_t(X)) + Z^j_t \tilde{\mu}(t,X,\hat{\alpha}_t(X)) + [g(t,X,\hat{\alpha}_t(X)) 
+ Z^j_t \tilde{\nu}(t,X,\hat{\alpha}_t)] \hat{b}_t(X)\\
= p^j(t,X,Z^j_t) \mbox{ $dt \times d\p$-a.e.},
$$
it follows as above that 
$$
Y^j_0 = \mathbb{E}^{\hat{\alpha}(X),\hat{\beta}(X)} \edg{\int_0^T f(s,X,\hat{\alpha}(X)_s) ds 
+ \int_0^T g(t,X,\hat{\alpha}(X)_s) d\hat{\beta}(X)_s + h(X)}.
$$
Moreover, $\alpha_t = \hat{\alpha}_t(X^{\alpha,\beta})$ and $\beta_t = 
\int_0^t \hat{b}_s(X^{\alpha, \beta})ds$ defines a pair in ${\cal A}$ such that
$(\alpha,\beta,X^{\alpha,\beta})$ has the same distribution under $\p$ as 
$(\hat{\alpha}(X), \hat{\beta}(X),X)$ under $\p^{\hat{\alpha}(X),\hat{\beta}(X)}$.
As a consequence,
$$
I^j = \mathbb{E} \edg{\int_0^T f(t,X^{\alpha,\beta},\alpha_t) dt + \int_0^T g(t,X^{\alpha,\beta},\alpha_t) d\beta_t 
+ h(X^{\alpha,\beta})},
$$
and the proof is complete.
\qed
$\mbox{}$\\
{\bf Proof of Theorem \ref{thm:sbsde}}\\
We know that $I^j \uparrow I$ and $I^j = Y^j_0$, where $(Y^j,Z^j)$ is the solution of \eqref{BSDE}.
On the other hand, it follows from Peng (1999) that $Y^j$ increases to $Y$, where $(Y,Z)$ is the maximal 
subsolution of \eqref{cBSDE}. \qed

\end{document}